\newcommand{\abs}[1]{\lvert#1\rvert}
\newcommand{\F}{\mathcal{F}}
\newcommand{\Z}{\mathbb{Z}}
\newcommand{\R}{\mathbb{R}}
\newcommand{\N}{\mathbb{N}}
\newcommand{\Q}{\mathbb{Q}}
\newcommand{\real}{\mathbb{R}}
\newcommand{\bigsetcond}[2]{\bigl\{ #1 \,:\, #2 \bigr\}}
\newcommand{\vi}{{\sf V}}
\newtheorem{theorem}{Theorem}[section]
\newtheorem{lemma}[theorem]{Lemma}
\newtheorem{coro}[theorem]{Corollary}
\theoremstyle{definition}
\newtheorem{definition}[theorem]{Definition}
\newtheorem{example}[theorem]{Example}
\theoremstyle{remark}
\numberwithin{equation}{section}
\begin{document}
\title[Chance-Constrained $S$-Convex Optimization]{Beyond Chance-Constrained Convex Mixed-Integer Optimization: {\small A Generalized Calafiore-Campi Algorithm and the notion of $S$-optimization.}}

\author{J. A. De Loera}
\author{R. N. La Haye}
\author{D. Oliveros}
\author{E. Rold\'an-Pensado}
\address[J. A. De Loera, R. N. La Haye]{Department of Mathematics, UC Davis}
\email{deloera@math.ucdavis.edu, rlahaye@math.ucdavis.edu}
\address[D. Oliveros, E. Rold\'an-Pensado]{Instituto de Matem\'aticas, UNAM campus Juriquilla}
\email{dolivero@matem.unam.mx, e.roldan@im.unam.mx}

\keywords{ Chance-constrainted optimization, Convex mixed-integer optimization, Optimization with restricted variable values, Randomized sampling algorithms, Helly-type theorems, $S$-optimization.}

\begin{abstract}
The scenario approach developed by Calafiore and Campi to attack chance-constrained convex programs (i.e., optimization problems 
with convex constraints that are parametrized by an uncertainty parameter) utilizes random sampling on the uncertainty parameter to substitute the original problem with a representative  continuous convex optimization with $N$ convex constraints which is a relaxation of the original. Calafiore and Campi provided an explicit estimate 
on the size $N$ of the sampling relaxation to yield high-likelihood feasible solutions of the chance-constrained problem. They  measured the probability  of the original constraints to be violated by the random optimal solution from the relaxation of size $N$.

This paper has two main contributions.  First, we present a generalization of the Calafiore-Campi results to both integer and mixed-integer variables. 
In fact, we demonstrate that their sampling estimates work naturally for variables that take on even more sophisticated values
restricted to some subset $S$ of $\R^d$. In this way, a sampling or scenario algorithm for chance-constrained convex mixed integer 
optimization algorithm is just a very special case of a stronger sampling result in convex analysis. The key elements, necessary for all the proofs, 
are generalizations of Helly's theorem where the  convex sets are required to intersect  $S \subset \R^d$. 
The size of samples in both algorithms will be directly determined by the $S$-Helly numbers.  

Motivated by the first half of the paper, for any subset $S \subset \R^d$, we introduce the notion of  an $S$-optimization problem, 
where the variables take on values over $S$. It generalizes continuous ($S=\R^d$), integer ($S=\Z^d$), and mixed-integer 
optimization ($S=\R^k \times \Z^{d-k}$). We illustrate with examples the expressive power of $S$-optimization to capture sophisticated 
combinatorial optimization problems with difficult modular constraints. We reinforce the evidence that $S$-optimization is ``the right concept'' 
by showing that the well-known randomized sampling algorithm of K. Clarkson for low-dimensional convex optimization problems can be extended 
to work with variables taking values over $S$.

\end{abstract}

\maketitle

\section{Introduction}

Chance-constrained optimization is a branch of stochastic optimization concerning problems in which constraints are imprecisely known but the problems need to be solved with a minimum probability of reliability or certainty.  Such problems arise quite naturally in many areas of finance (e.g., portfolio planning where losses should not exceed some risk threshold) \cite{gaivoronski2005value,pagnoncelli2008computational}, telecommunications (services agreements where contracts require network providers to guarantee with high probability that packet losses will not exceed a certain percentage) \cite{marianov2000probabilistic,venetsanopoulos1986topological}, and facility location (for medical emergency response stations, while requiring high probability of coverage over all possible emergency scenarios) \cite{aly1978probabilistic,beraldi2004designing}.  

Chance-constrained problems are notoriously difficult to solve because the feasible region is often not convex and because the probabilities can be hard to compute exactly. 
For information on how to solve such chance-constrainted problems and how to deal with probabilistic uncertain optimization see \cite{calafiorecampi2005,campigaratti, shapirodentchevarusz,prekopaetal, luedtke2008sample,pagnonetal,vielmaetal} and the excellent references therein.

\noindent We have two main contributions:


\subsection*{Sampling in Chance-constrained Convex Mixed Integer Optimization and beyond}
Our main result is a generalization of the \emph{scenario approximation method} of Calafiore and Campi
\cite{calafiorecampi2005,calafiorecampi2006} for continuous variables and convex constraints. 
Here we generalize their sampling algorithm for integer and mixed-integer variables. To state the result we need the following notions.
Let $\Omega$ be a probability space. Let  $f(\mathbf x,w)\colon ( \Z^{d-k}\times\R^k) \times \Omega\to\R$  be a convex function on $\mathbf x \in  \Z^{d-k}\times\R^k$ and measurable on $w \in \Omega$.
This function $f$ can be thought of as representing  constraints on $ \Z^{d-k}\times\R^k$, one for each value of $w$. 
Note that  ``$\mathbf x$ violates the constraint'' is a random event. The \emph{probability of violation} of a vector $\mathbf x\in \Z^{d-k}\times\R^k$ is defined as
$V(\mathbf x) = Pr[\{w\in\Omega:f(\mathbf x,w)>0\}].$ We seek a solution $\mathbf x$ with small associated value for $V (x)$, because it means it is feasible for  ``most'' of the problem instances. We also hope for our conclusion to hold with high confidence, or equivalently we wish to have small amount of distrust for the prediction.

\begin{coro} \label{gen-calafiorecampimip} 
Let $f$ and $\Omega$ be given as above. Let $0< \epsilon \leq 1$ (tolerance for violation), $0<\delta<1$ (distrust or lack of confidence) be chosen parameters. 
Suppose further that there is an  optimal value $\mathbf x_*$ of the linear minimization chance-constrained mixed-integer convex problem 

\begin{equation*}
\begin{split}
    \min \quad & c^T \mathbf x \\
   \text{subject to} \quad & V(\mathbf x)\leq \epsilon \\
	& {\mathbf x} \in K \, \text{convex set}, \\
    &{\mathbf x} \in \Z^{d-k}\times\R^k.
\end{split}
\end{equation*}

Then from a sufficiently large-size random sample of  $N$ different $i.i.d.$ values for $w$ (specifically, $w^1, w^2,\dots,w^N$), $\mathbf x_*$ 
can be $\delta$-approximated by the random variable $\mathbf x_N$, the optimal solution of the convex mixed-integer optimization problem 

\begin{align*} 
    \min \quad & c^T \mathbf x \\
    \text{subject to} \quad & f(\mathbf x, w^i) \leq 0 ,\quad i=1,2,\ldots, N, \\
	& \mathbf x \in K \text{ convex set}, \\
	& \mathbf x \in \Z^{d-k}\times\R^k.
\end{align*}

More precisely, if $x_N$ exists and the sample has size $N \geq \frac{2(2^{d-k}(k+1)-1)}{\epsilon} \ln(1/\epsilon) + \frac{2}{\epsilon} \ln(1/\delta)+2(2^{d-k}(k+1)-1),$ then
the undesirable event of high-infeasibility $V(x_N)> \epsilon$ has probability less than $\delta$ of occurring. 
\end{coro}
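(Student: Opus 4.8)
The plan is to derive the corollary as the mixed-integer specialization of a single scenario theorem valid for an arbitrary constraint set $S \subseteq \R^d$, whose sample size is governed by the $S$-Helly number $h(S)$. For $S = \Z^{d-k}\times\R^k$ the relevant quantity is the mixed-integer Helly number $h(S) = 2^{d-k}(k+1)$, so that the combinatorial quantity $\zeta := h(S) - 1 = 2^{d-k}(k+1) - 1$ is exactly the coefficient appearing in the displayed bound. The argument then splits into two essentially independent pieces: a geometric estimate bounding the number of \emph{support constraints} of the sampled program by $\zeta$, and the purely probabilistic Calafiore--Campi estimate that turns this combinatorial dimension into an explicit sample size.

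For the geometric piece, fix a realization of the samples and let $x_N$ be the (lexicographically tie-broken) optimizer, with value $v^{*} = c^T x_N$. Call constraint $i$ a support constraint if deleting it strictly lowers the optimum; for each such $i$ choose $y_i \in S \cap K$ feasible for every other constraint with $c^T y_i < v^{*}$. I would set $v := v^{*} - \eta$ with $\eta>0$ small enough that $c^T y_i < v$ for all (finitely many) support indices, and introduce the convex sets $\tilde B_i = \{x : f(x,w^i)\le 0\} \cap K \cap \bigcap_{j\text{ non-support}}\{x : f(x,w^j)\le 0\}$ together with the single objective sublevel set $H = \{x : c^T x \le v\}$. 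Feasibility of $x_N$ gives $x_N \in S \cap \bigcap_i \tilde B_i$, optimality of $v^{*}$ gives $H \cap \bigcap_i \tilde B_i \cap S = \emptyset$, and for each support index $i_0$ the point $y_{i_0}$ lies in $S \cap H \cap \bigcap_{i\ne i_0}\tilde B_i$. Hence, in the family $\{H\}\cup\{\tilde B_i\}$, the whole intersection misses $S$ while every proper subfamily meets $S$. If there were $h(S)$ or more support constraints, every $h(S)$-element subfamily would still be proper and thus meet $S$, so the $S$-Helly theorem would force the whole family to meet $S$ --- a contradiction. Therefore there are at most $\zeta = h(S)-1$ support constraints. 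Counting $H$ as a separate set, rather than intersecting it into each $\tilde B_i$, is precisely what produces the $-1$ and recovers the classical continuous count $d = (d+1)-1$ when $S=\R^d$.

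For the probabilistic piece, the key observation is that it never sees $S$: once the number of support constraints is bounded by $\zeta$ for every realization, the Calafiore--Campi exchangeability argument transfers verbatim. Conditioning on which $\zeta$-subset of samples forms the support and using that, on the bad event, each of the remaining $N-\zeta$ i.i.d.\ samples is satisfied by the corresponding partial optimizer with probability $1 - V \le 1-\epsilon$, one obtains $\Pr^N[V(x_N) > \epsilon] \le \binom{N}{\zeta}(1-\epsilon)^{N-\zeta}$. Finally I would invert this: bounding $\binom{N}{\zeta}$ and using $\ln(1-\epsilon)\le -\epsilon$, the requirement $\binom{N}{\zeta}(1-\epsilon)^{N-\zeta}\le\delta$ is implied by $N \ge \frac{2\zeta}{\epsilon}\ln\frac{1}{\epsilon} + \frac{2}{\epsilon}\ln\frac{1}{\delta} + 2\zeta$, which is exactly the stated bound after substituting $\zeta = 2^{d-k}(k+1)-1$.

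The main obstacle is the geometric estimate, and within it the honest use of the $S$-Helly theorem: one must verify that the sets $\tilde B_i$ and $H$ are genuinely convex (so that only the \emph{values} are constrained to $S$, never the sets), handle degeneracy and non-uniqueness of the optimizer through a fixed tie-breaking rule so that ``support constraint'' is well defined, and treat the existence hypothesis on $x_N$ exactly as in the statement. The probabilistic inversion is routine; all the conceptual content lies in replacing the Euclidean dimension by the mixed-integer Helly number $2^{d-k}(k+1)$.
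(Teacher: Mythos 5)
Your proposal is correct and follows essentially the same route as the paper: the corollary is obtained by specializing a general $S$-version of the Calafiore--Campi theorem to $S=\Z^{d-k}\times\R^k$ with mixed Helly number $2^{d-k}(k+1)$, bounding the support (``witness'') constraints by $h(S)-1$ via the $S$-Helly theorem applied to the constraint sets together with the objective sublevel set, and then running the standard conditioning argument to get $\binom{N}{h(S)-1}(1-\epsilon)^{N-(h(S)-1)}\leq\delta$. Your geometric step (explicit witnesses $y_i$, the $\eta$-perturbed level set $H$, and the ``every proper subfamily meets $S$'' minimality argument) is in fact spelled out more carefully than in the paper's own proof, but it is the same idea.
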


Note that when $k=0$, we are in the situation of chance-constrained integer convex optimization, which is a special case. In fact,
Corollary \ref{gen-calafiorecampimip} follows from a more general result. But before we can state it, we need one important definition on convex analysis.

\begin{definition}
For a nonempty family $\mathcal K$ of sets, the \emph{Helly number} $h=h(\mathcal K)\in\N$ of $\mathcal K$ is defined as the smallest number satisfying the following:
\begin{align*}\label{helly:cond}
	& \forall i_1,\ldots,i_h \in [m] : F_{i_1} \cap \cdots \cap F_{i_h} \neq \emptyset && \Longrightarrow && F_1 \cap \cdots \cap F_m \neq \emptyset
\end{align*}
for all $m \in\N$ and $F_1, \ldots, F_m \in \mathcal K$. If no such $h$ exists, then $h(\mathcal K):=\infty$. 

E.g., for the classical Helly's theorem that appears in all books in convexity, $\mathcal K$ is the family of all convex subsets of $\R^d$.

For $S \subseteq \real^d$ we define
\begin{equation*} 
	h(S) := h\bigl( \bigsetcond{S \cap K}{K \subset \real^d \ \text{is convex\,} }\bigr).
\end{equation*}
That is, $h(S)$ is the Helly number when the sets are required to intersect at points in $S$; we will call this the \emph{$S$-Helly number}. 
\end{definition}

For instance, when $S$ is finite then the bound $h(S)\le |S|$ is trivial. The original Helly number is $h(\R^d)=d+1$ and, interestingly, if $\F$ is any subfield of $\R$ (e.g., $\Q(\sqrt{2})$), then Radon's proof of Helly's theorem directly shows that the $S$-Helly number of $S=\F^d$ is still $d+1$. Doignon' theorem \cite{Doi1973} (later rediscovered in \cite{Bel1976,hoffman,Sca1977}) states that a finite family of convex sets in $\R^d$ intersect at a point of $\Z^d$ if every $2^d$ of members of the family intersect at a point of $\Z^d$. Another example is the work of A.J. Hoffmann in \cite{hoffman} and Averkov and Weismantel \cite{AW2012}  who gave a \emph{mixed} version of Helly's and Doignon's theorems which includes them both. This time the intersection of the convex sets is required to be mixed-integer, with variables taking values in $\Z^{d-k}\times\R^k$, and this can be guaranteed if every $2^{d-k}(k+1)$ sets intersect in such a point. 

The $S$- Helly number $h(S)$ is relevant for our purposes as it is a measure of the feasibility of a system of convex constraints over $S$. Essentially, if the system is $S$-infeasible,
then there must be a subsystem of size $h(S)$ or less that is infeasible. E.g., from the classical Helly's theorem one derives that, given (real ) infeasible convex constraints in $d$ variables would contain a subset of no more than $d+1$ constraints that certifies that the entire set has empty intersection (no common solution).
It is fair to say that applications in optimization have prompted many papers about Helly numbers \cite{AW2012,Bel1976,clarkson,ViolatorSpaces2008,hoffman,Sca1977}. In \cite{calafiorecampi2005,calafiorecampi2006}  the usual Helly number $d+1$ played a role for the size of a support set, if we are interested on solutions with values on $S$ we can use the $S$-Helly number to predict the size of a support set and recover the sample size.


We state here our most general theorem (for full details see Section \ref{CCstuff}).

\begin{theorem} \label{gen-calafiorecampi} Let $S \subseteq \R^d$ be a set with a finite Helly number $h(S)$. 
Let $0< \epsilon \leq 1$ (tolerance), $0<\delta<1$ (distrust) be chosen parameters.
Let $f({\mathbf x},w)$ be a convex function in $\mathbf x$ and  measurable in $w$. Suppose there is an  
optimal value $\mathbf x_*$ of the linear minimization chance-constrained problem 

\begin{equation*}
\begin{split}
  CCP(\epsilon)=  \min \quad & c^T \mathbf x \\
     \text{subject to} \quad & Pr[f(\mathbf x, w) > 0] < \epsilon, \\
	& {\mathbf x} \in K \, \text{convex set}, \\
    &{\mathbf x} \in S.
\end{split}
\end{equation*}

Then from a sufficiently large random sample of  $N$ different $i.i.d$ values for $w$ (specifically, $w^1, w^2,\dots,w^N$), $\mathbf x_*$ 
can be $\delta$-approximated by $\mathbf x_N$,  the optimal solution of the convex optimization problem 

\begin{align*}
  SCP(N)=  \min \quad & c^T \mathbf x \\
    \text{subject to} \quad & f(\mathbf x, w^i) \leq 0 ,\quad i=1,2,\ldots, N, \\
	& \mathbf x \in K \text{ convex set}, \\
	& \mathbf x \in S.
\end{align*}
More precisely, if $x_N$ exists and  the size of the sample $N \geq \frac{2(h(S)-1)}{\epsilon} \ln(1/\epsilon) + \frac{2}{\epsilon} \ln(1/\delta)+2(h(S)-1),$ then
the undesirable event of high-infeasibility $V(x_N)> \epsilon$ has probability less than $\delta$ of occurring. 
\end{theorem}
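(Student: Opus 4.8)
The plan is to follow the Calafiore--Campi scenario argument while replacing their use of the classical Helly number $d+1$ by the $S$-Helly number $h(S)$; everything rests on one structural lemma bounding the number of support constraints of $SCP(N)$.

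First I would fix a deterministic tie-breaking rule (say, lexicographic among the attained minimizers) so that for every index set $J\subseteq\{1,\dots,N\}$ the optimum $\mathbf x_J$ of the subproblem keeping only the scenarios $\{w^i:i\in J\}$ (together with $\mathbf x\in K\cap S$) is unambiguous and is a measurable function of $(w^i)_{i\in J}$ alone; in particular $\mathbf x_N=\mathbf x_{\{1,\dots,N\}}$. For a fixed realization, let $I$ be an inclusion-minimal subset with $\mathbf x_I=\mathbf x_N$; I call its elements the \emph{support constraints}, so that deleting any one of them strictly lowers the optimal value $\gamma=c^T\mathbf x_N$.

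The conceptual heart is the Helly Lemma: $|I|\le h(S)-1$. Set $L=K\cap\{\mathbf x:c^T\mathbf x<\gamma\}$ and $C_i=\{\mathbf x:f(\mathbf x,w^i)\le 0\}$; these are convex, so $S\cap L$ and each $S\cap C_i$ lie in the family whose Helly number is $h(S)$. Since $\mathbf x_I=\mathbf x_N$ is optimal for the $I$-subproblem, no point of $S$ beats it, i.e. $(S\cap L)\cap\bigcap_{i\in I}(S\cap C_i)=\emptyset$. On the other hand, removing $S\cap L$ leaves $\mathbf x_N$ in the intersection, and for each $i\in I$ minimality yields a point $\mathbf y_i$ that is feasible for $I\setminus\{i\}$ with $c^T\mathbf y_i<\gamma$, hence $\mathbf y_i\in(S\cap L)\cap\bigcap_{j\in I,\,j\ne i}(S\cap C_j)$ but $\mathbf y_i\notin C_i$. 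Thus $\{S\cap L\}\cup\{S\cap C_i:i\in I\}$ has empty total intersection while every proper subfamily is nonempty. By definition of $h(S)$ some subfamily of at most $h(S)$ of these sets already has empty intersection, and minimality forces it to be the whole family, whence $|I|+1\le h(S)$.

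Finally I would run the probabilistic counting step as in Calafiore--Campi. Since every realization with $V(\mathbf x_N)>\epsilon$ admits such an $I$ with $|I|\le h(S)-1$ and $\mathbf x_I=\mathbf x_N$ feasible for all scenarios, a union bound gives $Pr[V(\mathbf x_N)>\epsilon]\le\sum_{I:\,|I|\le h(S)-1}Pr[\,V(\mathbf x_I)>\epsilon\text{ and }f(\mathbf x_I,w^j)\le 0\ \forall\,j\notin I\,]$. As $\mathbf x_I$ depends only on $(w^i)_{i\in I}$, conditioning on those coordinates makes the $N-|I|$ remaining scenarios i.i.d., each meeting its constraint with probability $1-V(\mathbf x_I)<1-\epsilon$ on the relevant event; this contributes a factor $(1-\epsilon)^{N-|I|}$ and yields the binomial tail $Pr[V(\mathbf x_N)>\epsilon]\le\sum_{k=0}^{h(S)-1}\binom{N}{k}(1-\epsilon)^{N-k}$. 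A routine estimate using $\binom{N}{k}\le N^k$ and $(1-\epsilon)^{N-k}\le e^{-\epsilon(N-k)}$ then shows the displayed lower bound on $N$ forces this tail below $\delta$. I expect the main obstacle to be making the Helly Lemma and the conditioning rigorous under degeneracy: when optima or minimal reproducing sets are non-unique, one must verify that the exhibited family is genuinely minimal and that the tie-breaking rule keeps each $\mathbf x_J$ a function of its own scenarios, so the conditioning is legitimate. The $S$-Helly input itself is clean once the family $\{S\cap L\}\cup\{S\cap C_i\}$ is identified.
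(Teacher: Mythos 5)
Your proposal is correct and follows essentially the same route as the paper: both bound the number of support (witness) constraints by $h(S)-1$ by applying the $S$-Helly number to the family consisting of the sampled constraint sets together with the open half-space $\{c^T\mathbf x<c^T\mathbf x_N\}$, and then run the Calafiore--Campi union-bound/conditioning argument to obtain a binomial tail that the stated $N$ drives below $\delta$. Your write-up is in fact a bit more careful than the paper's on tie-breaking, measurability of $\mathbf x_I$, and the fact that the support set may have fewer than $h(S)-1$ elements (you keep the full sum $\sum_{k\le h(S)-1}\binom{N}{k}(1-\epsilon)^{N-k}$, where the paper pads to index sets of size exactly $h(S)-1$ and bounds a single term), but these are refinements of the same argument rather than a different proof.
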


Indeed, taking $S= \Z^{d-k}\times\R^k$ and because its Helly number is $h(S)=2^{d-k} (k+1)$,  we obtain as an immediate consequence the result for chance-constrained convex mixed integer optimization stated in Corollary \ref{gen-calafiorecampimip}. Moreover, we can provide the following guarantee of the quality of the solution:

\begin{theorem} \label{quality}
Let $0<\epsilon\leq1$ (tolerance), $0<\delta<1$ (confidence), and $N$ sufficiently large (as in Theorem \ref{gen-calafiorecampi}; note that this depends on $S$).
Let $J^\epsilon$ be the optimal objective value of $CCP(\epsilon)$ and $J^N$ be the optimal objective value of $SCP(N)$  (note that $J^N$ is a random variable).

\begin{enumerate}
\item
Suppose $CCP(\epsilon)$ is feasible. Then with probability of at least $1-\delta$, if $SCP(N)$ is feasible, it holds that $J^\epsilon\leq J^N$.

\item
Define $\epsilon_1=1-(1-\delta)^{1/N}$. With probability at least $1-\delta$, we have $J^N\leq J^{\epsilon_1}$.

\end{enumerate}
\end{theorem}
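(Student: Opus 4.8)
The plan is to prove the two parts by separate but elementary arguments, each reducing to a \emph{feasibility comparison} between the sampled problem $SCP(N)$ and a chance-constrained problem, and then reading off the corresponding inequality between optimal values.

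For part (1) I would invoke Theorem \ref{gen-calafiorecampi} directly. Since $CCP(\epsilon)$ is assumed feasible, $J^\epsilon$ is finite. Consider the event that $SCP(N)$ is feasible and let $\mathbf x_N$ be its optimizer, so $c^T \mathbf x_N = J^N$. Theorem \ref{gen-calafiorecampi} guarantees that $\Pr[V(\mathbf x_N) > \epsilon] < \delta$, i.e.\ $V(\mathbf x_N) \le \epsilon$ except on a set of probability less than $\delta$. On the complementary event (probability at least $1-\delta$) the point $\mathbf x_N$ lies in $K \cap S$ and satisfies the chance constraint, hence is feasible for $CCP(\epsilon)$; since $J^\epsilon$ is the minimum value over all feasible points, $J^\epsilon \le c^T \mathbf x_N = J^N$. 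The only care needed is to reconcile the strict inequality $V(\mathbf x)<\epsilon$ in $CCP(\epsilon)$ with the non-strict bound $V(\mathbf x_N)\le\epsilon$ delivered by the theorem, which I would absorb into the definition of $CCP(\epsilon)$ (as is already done in Corollary \ref{gen-calafiorecampimip}) or dispatch with a routine boundary/continuity remark.

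For part (2) I would run the complementary, lower-tolerance argument. Let $\mathbf x^*$ be an optimizer of $CCP(\epsilon_1)$ (if the optimum is not attained, take a near-optimizer and pass to the limit; if $CCP(\epsilon_1)$ is infeasible then $J^{\epsilon_1}=+\infty$ and the claim is vacuous). By feasibility, $\Pr[f(\mathbf x^*, w) > 0] < \epsilon_1$, equivalently $\Pr[f(\mathbf x^*, w) \le 0] > 1 - \epsilon_1 > 0$. Because $w^1, \ldots, w^N$ are i.i.d.\ copies of $w$, the $N$ events $\{f(\mathbf x^*, w^i) \le 0\}$ are independent, so
\[
  \Pr\bigl[\mathbf x^* \text{ is feasible for } SCP(N)\bigr]
  = \prod_{i=1}^{N} \Pr[f(\mathbf x^*, w^i) \le 0]
  > (1-\epsilon_1)^{N}.
\]
The defining choice $\epsilon_1 = 1 - (1-\delta)^{1/N}$ gives exactly $(1-\epsilon_1)^{N} = 1 - \delta$. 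Hence with probability at least $1-\delta$ the point $\mathbf x^*$ satisfies all $N$ sampled constraints and lies in $K \cap S$, so $SCP(N)$ is feasible and $J^N \le c^T \mathbf x^* = J^{\epsilon_1}$.

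Both steps are short, so the real work is in the bookkeeping rather than in any single hard estimate. The step I expect to require the most care is handling non-attainment of the optima and the strict-versus-non-strict inequalities in the chance constraints; neither is deep, but both must be stated cleanly so that $J^\epsilon$, $J^N$, and $J^{\epsilon_1}$ are unambiguously defined (including the conventions $J^N=+\infty$ when $SCP(N)$ is infeasible and $J^{\epsilon_1}=+\infty$ when $CCP(\epsilon_1)$ is). Everything else is an immediate consequence of Theorem \ref{gen-calafiorecampi} for part (1) and of the independence of the sampled scenarios for part (2).
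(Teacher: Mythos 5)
Your proposal is correct and follows essentially the same route as the paper: part (1) by applying Theorem \ref{gen-calafiorecampi} to conclude $\mathbf x_N$ is feasible for $CCP(\epsilon)$ with probability at least $1-\delta$, and part (2) by the independence computation $(1-\epsilon_1)^N = 1-\delta$ applied to (near-)optimizers of $CCP(\epsilon_1)$, with the same handling of the infeasible case and of non-attainment via a minimizing sequence. Your extra remark about reconciling the strict versus non-strict chance constraint is a point the paper itself glosses over, so no gap here.
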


%
%


\subsection*{$S$-optimization and Clarkson's Algorithm}

The essential arguments used in the Calafiore-Campi scenario method apply to more complicated variable values over $S$, well beyond the
reals or the integers. The proofs are also the same. This motivated us to introduce the notion of $S$-optimization, a natural  generalization of continuous, integer, and mixed-integer optimization:

\begin{definition} 
Given $S \subset \R^d$, the optimization problem with equations, inequalities and variables taking values on $S$,
\begin{align*}
    \max \quad              & f(\mathbf x)  \\
    \text{subject to} \quad & g_i(\mathbf x) \leq 0, \quad i=1,2,\ldots, n, \\
                    & h_j(\mathbf x) = 0,\quad j=1,2,\ldots, m,\\                  &{\mathbf x} \in S,
\end{align*}
will be called an {\em $S$-optimization problem.} 
\end{definition}

Clearly when $S=\R^d$ the $S$-optimization problem is the usual continuous optimization problem, 
 $S=\Z^d$ is just integer optimization, and $S=\Z^k \times \R^{d-k}$ is the case of mixed-integer optimization. 
When only linear constraints are present this is an {\em $S$-linear program.} When all constraints are convex we call
this an \emph{$S$-convex program}. This paper presents two algorithmic results about $S$-convex programs.

But, why study $S$-optimization? Or, rather, why do it for an \emph{unfamiliar} set $S$? As we show below 
$S$-optimization problems  have natural expressive power, sometimes using fewer or simpler constraints than
standard continuous or mixed integer optimization.  

Here are two more unusual examples of $S$-optimization problems. This time we  model succinctly with more sophisticated 
$S \subset \R^d$ (typically discrete sets). 

\begin{example}
Given a graph $G=(V,E)$, we reformulate the classic graph $K$-coloring query as the solvability of the following linear system of modular inequations:
For all $(i,j)$ in $E(G)$ consider the inequations $c_i \not \equiv c_j \mod K$. This is a system on $\abs{V}$ variables and it has a solution if and only if the graph is $K$-colorable.
Note that the set of points $\mathbf c=(c_1,\dots,c_{\abs{V}})$ with $c_i \equiv c_j \mod K$ is a lattice, which we call $L_{i,j}$. Therefore, solving our system of inequalities is equivalent to finding a $\mathbf c\in S=\Z^{\abs{V}}\setminus(\bigcup_{i,j} L_{i,j})$. Consequently, the problem of deciding $k$-colorability is equivalent to the problem of finding a solution to an $S$-linear system of equations, where the variables take values on $S$,  the set difference of a lattice and a union of several sublattices.
\end{example}

\begin{example} Here is another instance of $S$-optimization which has ancestors in \cite{glover}.
We are interested in the the solutions of the following modular mixed-integer optimization problem:

\begin{align*}
	\min \quad & 3x_1+7x_2 +4x_3 +  \sum_{i \geq 4} ^{N}  (100-i) x_i \\
    \text{subject to} \quad & 8x_1+3x_2 + 5x_3 \equiv 6  \mod 11, \\
    & 6x_1+4x_2 -3x_3 \equiv 1 \mod 2, \\
	& x_1\not \equiv x_3 \mod 5,  \\
	& x_1+x_2+x_3+ \sum_{i \geq 6} ^{N}  x_i \leq 1000, \\
    &  x_1 \not\equiv 2,4,16 \mod 23,\quad x_2 \equiv 0 \mod 2,\quad x_3 \equiv 2 \mod 3,  \\
    & x_1,x_2,x_3 \geq 0 \text{ and integral,} \\
    & x_i, \, i=4,\dots,N \text{ continuous.}
\end{align*}

Note that only the integral variables have modular restrictions. 
By adding integer slacks, we can reformulate this problem as a  problem
with only six integral variables (with modular restrictions) and $N$ continuous variables.
\begin{align*}
	\min \quad & 3x_1+7x_2 +4x_3 +  \sum_{i \geq 4} ^{N}  (100-i) x_i \\
    \text{subject to} \quad & 8x_1+3x_2 + 5x_3=y_1 \\
    & 6x_1+4x_2 -3x_3=y_2, \\
	& x_1-x_3=y_3, \\
	& x_1+x_2+x_3+ \sum_{i \geq 6} ^{N}  x_i \leq 1000, \\
    & x_1 \not\equiv 2,4,16 \mod 23,\quad x_2 \equiv 0 \mod 2,\quad x_3 \equiv 2 \mod 3, \\
    & y_1 \equiv 6 \mod 11,\quad y_2 \equiv 1 \mod 2,\quad y_3 \not\equiv 0 \mod 5,  \\
	& x_1,x_2,x_3 \geq 0 \text{ and integral,} \\
	& x_i, \, i=4,\dots, N \text{ continuous.}
\end{align*}
  
What is the set $S \subset \R^6$ where the variables take on values for this situation? 
The answer can be described  first as $S_1 \times S_2 \times S_3 \times S_4 \times S_5 \times S_6 \times \R^{N-3}$, 
where $S_i$ can be described as the difference between $\Z$ and the subtraction of cosets (or translated sublattices) with respect to lattices of multiples
of an integer $q$. Thus at the the end $S_1 \times S_2 \times S_3 \times S_4 \times S_5 \times S_6$ can be written as the lattice $\Z^6$ from which we subtract the union of several translated sublattices.
\end{example}

We must remark that the Helly number for the difference of a lattice and a union of its sublattices has been estimated in \cite{justhelly}. 
To fully stress that $S$-optimization is the right notion we present a second algorithm which works well for $S$-convex  programs.

%
%
%
%
In \cite{clarkson},  K. Clarkson introduced a family of  algorithms which,  relying on repeated calls to an oracle that  optimizes small-size subsystems,  iteratively samples from the original (large) optimization problem until it reaches a global optimum. The expected runtime  is linear in the number of input constraints.  Our key observation is that  the same Clarkson ideas are applicable to $S$-convex optimization problems that have a  sampling size given by the Helly number $h(S)$ of the variable domain. Clarkson, in his ground-breaking work, applied his ideas already to traditional linear and integer linear optimization because he had the $S$-Helly numbers of $S=\R^d$ and $S=\Z^d$. 
However, he did not explicitly invoke the concept of the $S$-Helly number. By doing this, we now present a direct generalization of  his algorithms. Our proof that Clarkson's algorithm extends relies on the theory of violator spaces \cite{ViolatorSpaces2008}.

\begin{theorem}\label{ourclarkson}
Let $S \subseteq \R^d$ be a closed set with a finite Helly number $h(S)$. 
Using Clarkson's algorithm, one can find a solution of the $S$-convex optimization problem
\begin{align*} 
    \min \quad              & c^T \mathbf x \\
    \text{subject to} \quad & f_i(x)  \leq 0, \quad f_i \text{ convex for all }  i=1,2,\ldots, m,\\
               &{\mathbf x} \in S,
\end{align*}
in an expected $O\left(h(S)m+ h(S)^{O(h(S))}\right)$  calls to an oracle  that solves smaller subsystems of the system above of size $O(h(S))$. Thus, 
when a violation primitive oracle runs in polynomial time and $h(S)$ is small, Clarkson's algorithm runs in expected linear time in the number of constraints.
\end{theorem}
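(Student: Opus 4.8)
The plan is to recast the $S$-convex program as a \emph{violator space} in the sense of \cite{ViolatorSpaces2008} and then to show that its combinatorial dimension is governed by the $S$-Helly number $h(S)$; once this is in place, Clarkson's algorithm for violator spaces yields the stated bound almost verbatim. Concretely, let $H=\{1,\dots,m\}$ index the constraints $f_i(\mathbf x)\le 0$, write $C_i=\{\mathbf x:f_i(\mathbf x)\le 0\}$ for the associated closed convex sets, and for $G\subseteq H$ let $w(G)$ denote the optimal value of $\min\{c^T\mathbf x:\mathbf x\in S\cap\bigcap_{i\in G}C_i\}$ (with $w(G)=+\infty$ when infeasible). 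Define the violation map $\mathcal V(G)=\{\,i\in H: w(G\cup\{i\})>w(G)\,\}$, the set of constraints that strictly worsen the optimum of $G$.

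First I would verify that $(H,\mathcal V)$ is a violator space, that is, that it satisfies \emph{consistency} ($G\cap\mathcal V(G)=\emptyset$, which is immediate since $w(G\cup\{i\})=w(G)$ for $i\in G$) and \emph{locality} (if $F\subseteq G$ and $\mathcal V(F)\cap G=\emptyset$ then $\mathcal V(F)=\mathcal V(G)$). Here the closedness of $S$ and of the $C_i$ is used to guarantee attainment of the relevant infima, so that the value function is well behaved; I would dispose of the degeneracies (ties in the objective, non-uniqueness of minimizers) inside the violator-space formalism, which was designed precisely so as \emph{not} to require a unique optimum. This robustness is exactly what lets the argument survive the fact that $S$ is not convex.

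The heart of the proof is the bound on the combinatorial dimension $\delta$, the maximum size of a basis, and I claim $\delta\le h(S)-1$. Fix $G\subseteq H$ with finite optimum $v^*=w(G)$ and consider the open convex sublevel set $D=\{\mathbf x:c^T\mathbf x<v^*\}$. By optimality $D\cap S\cap\bigcap_{i\in G}C_i=\emptyset$, so the convex family $\{D\}\cup\{C_i\}_{i\in G}$ has empty intersection with $S$. By the defining property of $h(S)$ some subfamily of size at most $h(S)$ already misses $S$; this subfamily must contain $D$, for otherwise $h(S)$ of the $C_i$ would be $S$-infeasible, contradicting $w(G)<+\infty$. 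This leaves at most $h(S)-1$ indices $i_1,\dots,i_r$ with $D\cap S\cap C_{i_1}\cap\dots\cap C_{i_r}=\emptyset$, whence $G'=\{i_1,\dots,i_r\}\subseteq G$ already satisfies $w(G')=v^*$; locality then forces $\mathcal V(G')=\mathcal V(G)$, so $G$ has a basis of size at most $h(S)-1$ and $\delta\le h(S)-1$.

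Finally I would invoke Clarkson's two-stage algorithm for violator spaces of combinatorial dimension $\delta=O(h(S))$: its reweighting/sampling stages make an expected $O(\delta m)$ violation tests, each of which is an oracle call on a subsystem of size $O(\delta)$, and reduce the problem to subproblems on $O(\delta^2)$ constraints whose bases (of size $O(\delta)$) are found by $\delta^{O(\delta)}$ direct solves on size-$O(\delta)$ subsystems. Substituting $\delta=O(h(S))$ reproduces the advertised $O\!\left(h(S)m+h(S)^{O(h(S))}\right)$ oracle calls on subsystems of size $O(h(S))$. I expect the main obstacle to be the careful verification of locality together with the attainment and tie-breaking issues over the non-convex domain $S$: this is where the closedness hypothesis enters, and where one must confirm that the value-based pair $(H,\mathcal V)$ is genuinely consistent and local rather than merely an LP-type heuristic.
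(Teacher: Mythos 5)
Your overall route is the same as the paper's: encode the constraints as a violator space, bound its combinatorial dimension by $h(S)-1$ via the $S$-Helly argument (adding the open half-space $c^T\mathbf x< v^*$ to the family and noting that the Helly-sized infeasible subfamily must contain it), and then invoke \cite[Theorem~27]{ViolatorSpaces2008} to get the $O\bigl(h(S)m+h(S)^{O(h(S))}\bigr)$ bound. That part of your write-up matches the paper essentially step for step.

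There is, however, a genuine gap in the violator map you chose. Defining $\mathcal V(G)=\{i:w(G\cup\{i\})>w(G)\}$ purely in terms of optimal \emph{values} does not satisfy locality when minimizers are non-unique. Concretely, take $S=\R^2$, minimize $x_1$, and let $h_0\colon -x_1\le 0$, $h_1\colon x_2-x_1\le 0$, $h_2\colon -x_2-x_1\le 0$, $h_3\colon x_2+1\le 0$. With $F=\{h_0\}$ and $G=\{h_0,h_1,h_2\}$ one has $w(F)=w(F\cup\{h_i\})=w(G)=0$ for every $i$, so $\mathcal V(F)=\emptyset$ and the hypothesis $G\cap\mathcal V(F)=\emptyset$ of locality holds; yet $w(G\cup\{h_3\})=1>0=w(G)$, so $h_3\in\mathcal V(G)\setminus\mathcal V(F)$ and locality fails. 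Your stated remedy --- that the violator-space formalism ``does not require a unique optimum'' --- does not help here: the abstract framework indeed tolerates set-valued solutions, but the particular map you defined must still satisfy the locality axiom, and this one does not. The paper avoids the problem by defining $\vi(G)$ through a \emph{uniquely selected} optimal solution $\vec x_G$ (a universal tie-breaking rule such as lexicographic order, whose attainment is guaranteed by the closedness of $S$), and the locality verification is carried out for that solution-based map. So you should replace the value-based $\mathcal V$ by the tie-broken solution-based one; with that change the rest of your argument, including the Helly bound on the basis size, goes through as in the paper.
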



\section{ A Calafiori-Campi Style Algorithm for Chance-Constrained Convex $S$-optimization} \label{CCstuff}


We begin with some formal preliminaries. In all that follows, let $S$ be a proper subset of $\R^d$ and let $\Omega$ be a probability space.
Suppose we have a function $f(\mathbf x,w)\colon S\times \Omega\to\R$ which is convex on $\mathbf x \in S$ and measurable on $w$. (chance variables that represent stochasticity). 
This parametric function $f$ can be thought of as representing one constraint on $S$ for each value of $w$: given $\mathbf x\in S$, $\mathbf x$ satisfies the constraint if $f(\mathbf x,w)\leq 0$ and violates the constraint otherwise.  Note that  ``$\mathbf x$ violates the constraint'' is a random event.  We have the following definition:

\begin{definition}[\cite{calafiorecampi2005,calafiorecampi2006}]
Let $\mathbf x\in S$ be given. The \emph{probability of violation} of $\mathbf x$ is defined as
$$V(\mathbf x) = Pr[\{w\in\Omega:f(\mathbf x,w)>0\}].$$
\end{definition}

For example, if we take the uniform probability density  (with respect to Lebesgue's measure). 
Then $V (x)$ is just  the volume of those parameters $w$ for which $f(x,w) \leq 0$  is violated. 
We seek a solution $x$ with small associated value for $V (x)$, because it means is feasible for  ``most'' of the problem instances. 
When a vector $\mathbf x\in S$ has a small probability of violation $V(x)$, $\mathbf x$ is said to be \emph{approximately feasible} 
(this notion was first studied in \cite{barmish2000avoiding}).

Let $\epsilon \in [0,1]$ represent the \emph{tolerance for violation}. For $\bf x\in S$ if we have $ Pr[f(\mathbf x, w) \leq 0] \geq 1-\epsilon$, we say $x$ is an  $\epsilon$-level 
feasible solution.  In other words, $\epsilon$-level solutions are those with $V(x)<\epsilon$.  Moreover, we would like to be confident that high probability of violation is unlikely to
occur among the constraints, so we will use a \emph{distrust parameter} $\delta \in [0,1]$. When $\delta$ is small, it represents high confidence on the prediction.  Our goal is  to find
$x^*$ such that
$$Pr[\{V(x^*)\geq\epsilon\}] \leq \delta.$$

Given a linear cost function $\mathbf x\mapsto c^T\mathbf x$ and the tolerance $\epsilon$, 
a natural problem is that of minimizing $c^T\mathbf x$ over $\{\mathbf x\in S|V(\mathbf x)<\epsilon\}$. This is a chance-constrained $S$-convex optimization problem $CCP(\epsilon)$:

\begin{equation}  \label{Sconstrained}
\begin{split}
  CCP(\epsilon)=  \min \quad & c^T \mathbf x \\
    \text{subject to} \quad & V(\mathbf x) <\epsilon, \\
	& {\mathbf x} \in K \, \text{convex set}, \\
    &{\mathbf x} \in S.
\end{split}
\end{equation}

The key idea to solve $CCP(\epsilon)$ is to create a similar, but easier, problem. 
We may sample $N$ i.i.d. values $w^1, w^2,\dots,w^N$ from $\Omega$. 
This gives us the \emph{sampled convex program}

\begin{equation} \label{Nsample}
\begin{split}
  SCP(N)=  \min \quad & c^T \mathbf x \\
    \text{subject to} \quad & f(\mathbf x, w^i) \leq 0 ,\quad i=1,2,\ldots, N, \\
	& \mathbf x \in K \text{ convex set}, \\
	& \mathbf x \in S.
\end{split}
\end{equation}

Denote $\mathbf x_N$ the (uniquely selected) optimal solution of the problem \eqref{Nsample}. 
$SCP(N)$ can be solved to produce an optimal solution $\mathbf x_N$. Note that $x_N$ is a random variable.
Since $\mathbf x_N$ is random, $\{V(x_N)\geq\epsilon\}$ is an event with some probability.  In fact, 
$V(x^N)$ is a random variable in the space $\Delta^N$ with product probability measure $Pr\times Pr \times \dots \times Pr=Pr^N$.
We claim that  for large enough $N$, we have $Pr[\{V(x^N)\geq\epsilon\}] \leq \delta.$
  
The key point of the sampling algorithms is to show that this is satisfied for sufficiently large sample size $N$. For $S=\R^d$, Calafiore and Campi found what sufficiently large $N$ is necessary in \cite{calafiorecampi2005,calafiorecampi2006}. Here we extend the scenario approximation scheme from \cite{calafiorecampi2005,calafiorecampi2006} to the mixed-integer case, $S=\Z^k\times\R^{d-k}$. Note that this includes the pure integer case $S=\Z^d$ as well as other $S$. 

Before we start the proof of the main results we require a purely technical estimation:

\begin{lemma} If
\begin{equation} \label{larga}
N \geq \frac{1}{1-r}  \left(    \left(\frac{1}{\epsilon}\right)  \ln \left(\frac{1}{\delta}\right)  +   h  +  \left(\frac{h}{\epsilon}\right) \ln \left(\frac{1}{r \epsilon}\right) + \frac{1}{\epsilon} \ln \left( \left(\frac{h}{\varepsilon}\right)^h \left(\frac{1}{h!}\right) \right) \right),
\end{equation}
then $\binom{N}{h} (1-\epsilon)^{N-h} \leq \delta$.
\end{lemma}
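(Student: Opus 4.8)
The plan is to take logarithms and reduce the desired inequality $\binom{N}{h}(1-\epsilon)^{N-h}\le\delta$ to a single scalar inequality in $N$, then absorb the problematic logarithmic dependence on $N$ using an elementary tangent-line bound. Concretely, after taking logarithms the target becomes
$$\ln\binom{N}{h} + (N-h)\ln(1-\epsilon) \le -\ln(1/\delta),$$
which I would rewrite as $(N-h)\ln\frac{1}{1-\epsilon} \ge \ln\binom{N}{h} + \ln\frac{1}{\delta}$. Two standard estimates clean this up: the inequality $\ln\frac{1}{1-\epsilon}\ge\epsilon$ (from the Taylor expansion of $-\ln(1-\epsilon)$), and the crude binomial bound $\binom{N}{h}\le N^h/h!$, giving $\ln\binom{N}{h}\le h\ln N - \ln h!$. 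It therefore suffices to establish
$$N\epsilon \ge h\epsilon + h\ln N - \ln h! + \ln\frac{1}{\delta}.$$

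The main obstacle is that $N$ now appears both linearly (as $N\epsilon$) and logarithmically (as $h\ln N$), so one cannot simply solve for $N$. The key trick I would use is the elementary inequality $\ln N \le aN - 1 - \ln a$, valid for every $a>0$ (it is the tangent-line bound to the concave function $\ln$ at $N=1/a$). The parameter $r\in(0,1)$ appearing in \eqref{larga} is exactly the degree of freedom in this step: choosing $a=r\epsilon/h$ yields
$$h\ln N \le r\epsilon N - h + h\ln\frac{h}{r\epsilon}.$$
Substituting this into the previous display and moving the $r\epsilon N$ term to the left-hand side collapses the circular dependence, leaving $(1-r)\epsilon N \ge h\epsilon - h + h\ln\frac{h}{r\epsilon} - \ln h! + \ln\frac{1}{\delta}$, an honest lower bound on $N$ with no hidden $N$ on the right.

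Finally I would check that the clean hypothesis \eqref{larga} implies this sufficient condition. Dividing through by $(1-r)\epsilon$ and expanding $\ln\frac{h}{r\epsilon}=\ln\frac{1}{r\epsilon}+\ln h$, one sees that the threshold I derived is smaller than the right-hand side of \eqref{larga} by exactly $\frac{1}{1-r}\cdot\frac{h}{\epsilon}\bigl(\ln\frac{1}{\epsilon}+1\bigr)$, which is nonnegative because $\epsilon\le 1$. Hence any $N$ satisfying \eqref{larga} also satisfies the derived sufficient condition, completing the argument. I expect the only genuinely delicate point to be the choice of $a$ in the tangent-line step; every other step is a routine monotone estimate, and the slack introduced by $\binom{N}{h}\le N^h/h!$ and $\ln\frac{1}{1-\epsilon}\ge\epsilon$ is comfortably absorbed by the stated bound.
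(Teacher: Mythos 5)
Your proof is correct and is essentially the paper's own argument run in reverse: the paper's application of $\ln x\ge 1-\tfrac1x$ at $x=\tfrac{h}{rN\epsilon}$ is exactly your tangent-line bound $\ln N\le aN-1-\ln a$ with $a=\tfrac{r\epsilon}{h}$, and both proofs combine it with the same estimates $\binom{N}{h}\le N^h/h!$ and $e^{-\epsilon}\ge 1-\epsilon$, discarding the same slack of $\tfrac{1}{1-r}\cdot\tfrac{h}{\epsilon}\bigl(\ln\tfrac1\epsilon+1\bigr)$. The only difference is presentational (you derive a sufficient threshold and check \eqref{larga} dominates it, while the paper chains forward from \eqref{larga}), so no further comparison is needed.
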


\begin{proof}[\bf Proof]
Note that
$$N \geq \frac{1}{1-r}  \left(    \left(\frac{1}{\epsilon}\right)  \ln \left(\frac{1}{\delta}\right)    +    h  +  \left(\frac{h}{\epsilon}\right) \ln \left(\frac{1}{r \epsilon}\right) + \frac{1}{\epsilon} \ln \left( \left(\frac{h}{\epsilon}\right)^h \left(\frac{1}{h!}\right) \right)\right) $$
implies that

$$(1-r) N \geq   \left(\frac{1}{\epsilon}\right)  \ln \left(\frac{1}{\delta}\right)    +    h  +  \left(\frac{h}{\epsilon}\right) \left( \ln \left(\frac{h}{r \epsilon}\right) -1\right) - \frac{1}{\epsilon} \ln \left({h!}\right).$$
Thus
$$N \geq   \left(\frac{1}{\epsilon}\right)  \ln \left(\frac{1}{\delta}\right)    +    h  +  \left(\frac{h}{\epsilon}\right) \left( \ln \frac{h}{r \epsilon} -1+\frac{rN\epsilon}{h}\right) - \frac{1}{\epsilon} \ln ({h!}).$$

But then, using the fact that $\ln(x) \geq 1 - \frac{1}{x}$ for positive values of $x$ and applying it to $x=\frac{h}{rN\epsilon}$, we obtain 
$$ N \geq   \left(\frac{1}{\epsilon}\right)  \ln \left(\frac{1}{\delta}\right)    +    h  +  \left( \frac{h}{\epsilon}\right) \ln \left(N\right)  - \frac{1}{\epsilon} \ln ({h!}).$$

From this last equation one can bound the logarithm of $\delta$, in such a way that
$ \ln(\delta) \geq -\epsilon N + \epsilon h + h \ln(N) - \ln(h!)$. Therefore, using the fact that $e^{-\epsilon (N-h)} \geq (1-\epsilon)^{N-h}$ (because $-\epsilon \geq \ln(1-\epsilon)$), we obtain
$$ \delta \geq \frac{N^h}{h!} e^{-\epsilon (N- h)} \geq \frac{ N(N-1)\dots (N-h+1)}{h!} (1-\epsilon)^{N-h}.$$

This last inequality can be rewritten as $\delta \geq \binom{N}{h} (1-\epsilon)^{N-h}$, finishing the proof of the statement.
\end{proof}

We now present the proofs of Theorems \ref{gen-calafiorecampi} and \ref{quality}. Recall we are concerned with the linear minimization chance-constrained problem $CCP(\epsilon)$.
We assume that the problem has an optimal solution.

\begin{proof}[Proof of Theorem \ref{gen-calafiorecampi}] 
Suppose we have the sampling set $\{w^1,\dots,w^N\}$. Denote again by $x_N$ the optimal solution for the auxiliary problem \eqref{Nsample} obtained from the sampling. 
Note that because $f$ is convex, each choice $w^i$ gives  an $S$-convex set $K_i=\{ x \in K: f(x,w^i) \leq 0\}$. The proof will require the use of the $S$-Helly number $h(S)$. 
The most important fact to do the estimations is that if we have the optimum value of \eqref{Nsample}, the optimal solution is defined by no more than $(h(S)-1)$ of the $K_i$. 
This is because $K_i$, $i=1\dots N$, together with $c^Tx<c^Tx_N$, is a convex set which has no solutions in $S$. Thus, by the definition of the $S$-Helly number, there are no
more than $h(S)$ infeasible subfamilies; this means that from the original $K_i$ only $h(S)-1$ participate. We call these $h(S)-1$ subsets the \emph{witness constraints} of the problem \eqref{Nsample}.

Let $\Gamma_N$ be the set of all possible values  $N$ i.i.d samples can take $w^1,w^2,\dots,w^N$. Now consider all possible index sets  $I \subset [N]=\{1,\dots, N\}$ of cardinality $(h(S)-1)$ and define 
$$\Gamma_N^I=\left\{(w^1,\dots,w^N) \in \Gamma_N: (w^i)_{i \in I} \text{ defines the witness constraints of } \eqref{Nsample}\right\}.$$

Therefore, $\Gamma_N$ can be written as the union of the $\Gamma_N^I$ for all possible choices of $I$. Using this we will bound the probability that $x_N$ is not 
in the solution set of \eqref{Sconstrained}. For simplicity, let $$R_{\epsilon}=\{{\mathbf x} \in K \cap S:  \, Pr[  f(\mathbf x, w) \leq 0] \geq 1-\epsilon \} \quad \text{and} \quad G_\epsilon=(K \cap S) \setminus R_\epsilon.$$

\begin{align*}
Pr\left[ \left\{\left(w^1,\dots, w^N\right) \in \Gamma_N :  x_N \in G_{\epsilon} \right\}\right] \leq & \\
\sum_{I \subset \left[N\right], \abs{I}=\left(h\left(S\right)-1\right)}  Pr\left[ \left\{\left(w^1,\dots, w^N\right) \in \Gamma_N^I : x_I \in G_{\epsilon} \right\}\right] =& \\
\sum_{I \subset \left[N\right], \abs{I}=\left(h\left(S\right)-1\right)}  Pr\left[ \left\{\left(w^i\right)_{i\in I}:  x_I \in G_{\epsilon} \right\} \cap \left\{\left(w^i\right)_{i \in I}:  f\left(x_I,w^j\right) \leq 0, j \notin I\right\} \right] =& \\
\begin{aligned}
\sum_{I \subset \left[N\right], \abs{I}=\left(h\left(S\right)-1\right)} & Pr\left[ \left\{\left(w^i\right)_{i\in I}:  x_I \in G_{\epsilon} \right\}\right] \times \\ & Pr\left[ \left\{\left(w^i\right)_{i \in I}: f\left(x_I,w^j\right) \leq 0, j  \notin I\right\} \Bigm| \left\{\left(w^i\right)_{i\in I}:  x_I \in G_{\epsilon} \right\} \right]=
\end{aligned} & \\
\begin{aligned}
\sum_{I \subset \left[N\right], \abs{I}=\left(h\left(S\right)-1\right)} & Pr\left[ \left\{\left(w^i\right)_{i\in I}:  x_I \in G_{\epsilon} \right\}\right] \times \\
& \prod_{j \notin I} Pr\left[ \left\{\left(w^i\right)_{i \in I}:  f\left(x_I,w^j\right) \leq 0\right\} \Bigm| \left\{\left(w^i\right)_{i\in I}:  x_I \in G_{\epsilon} \right\} \right] \leq
\end{aligned} & \\
\binom{N}{h\left(S\right)-1} \left(1-\epsilon\right)^{N-\left(h\left(S\right)-1\right)}. \phantom{=}&
\end{align*}

The last inequality is true because, for the first type of factors probability is less than or equal 1, and for the second type (in the product) each of the factors in the product has  probability no more than $ 1-\epsilon$. Therefore we wish to  choose $N$ in such a way that we get that $\binom{N}{(h(S)-1)} (1-\epsilon)^{N-(h(S)-1)} \leq \delta$. 

Finally, from  Lemma \ref{larga}  one can derive the bound stated in the theorem by two simple observations: 
First simply set $h=(h(S)-1)$, second the last term can be dropped because it is not positive (this is the case since $n! \geq (n/e)^n$).  
In addition one can take $r$ to be any value between $0$ and $1$. Thus, taking $r=1/2$ one gets the statement of the theorem.
\end{proof}


\begin{proof}[ Proof of Theorem \ref{quality}]
The first claim is trivial:
Since $N$ is sufficiently large, it follows from Theorem \ref{gen-calafiorecampi} that with probability at 
least $1-\delta$, the optimal solution $x^N$ of $SCP(N)$ is a feasible solution of $CCP(\epsilon)$.
Hence $J^\epsilon\leq c^T x^N=J^N$. with probability at least $1-\delta$.

For the second claim, there are two cases: $CCP(\epsilon_1)$ feasible and $CCP(\epsilon_1)$ infeasible.
If $CCP(\epsilon_1)$ is infeasible, then $J^N\leq\infty=J^{\epsilon_1}$. Suppose $CCP(\epsilon_1)$ is feasible and
consider an arbitrary $x\in K\cap S$ which is feasible for $CCP(\epsilon_1)$.
That is, let $x\in K\cap S$ such that $Pr[f(x,w)\leq0]\geq 1-\epsilon_1=(1-\delta)^{1/N}$.
Since the $N$ samples in $SCP(N)$ are independently chosen, 
the probability that $x$ is feasible for $SCP(N)$ is at least $\left((1-\delta)^{1/N}\right)^N=1-\delta$.


Since $CCP(\epsilon_1)$ is feasible, there is a sequence of vectors $(x_i)$ which are feasible for $CCP(\epsilon_1)$ such that $c^T x_i$ converges to $J^{\epsilon_1}$.
Because these vectors are feasible for $SCP(N)$ with probability at least $1-\delta$, the probability that $J^N\leq c^T x_i$ is at least $1-\delta$ for any $i\in\N$. It follows that $J^N\leq J^{\epsilon_1}$ with probability at least $1-\delta$.
\end{proof}


To conclude it is important to mention that Luedtke and Ahmed \cite{luedtke2008sample} have also studied chance constrained optimization. 
Their results are related to ours in that they obtain bounds on $N$ such that with high probability, the solution to a sampled problem is feasible in $CCP(\epsilon)$ with high probability. However, their constraints on $K$ and $f$ are different.
When $K\cap S$ is finite (e.g., purely integer variables), \cite{luedtke2008sample} showed that it is sufficient to take
$$N\geq\frac1\epsilon\ln\left(\frac1\delta\right)+\frac1\epsilon\ln\left(|K\cap S|\right).$$
This bound is better than our Theorem \ref{gen-calafiorecampi} as long as $h(S)>\ln(|K\cap S|)$ (e.g., integer variables $S=\Z^d$ since $h(S)=2^d$).

Luedtke and Ahmed also studied the mixed integer case too; for Lipschitz continuous $f$, Luedtke and Ahmed showed that if
$$N\geq\frac2\epsilon\ln\left(\frac1\delta\right)+\frac{2n}\epsilon\left\lceil\frac{2LD}\gamma\right\rceil+\frac2\epsilon\ln\left\lceil\frac2\epsilon\right\rceil,$$
where $L$ is the Lipschitz constant of $f$ and $D$ bounds the diameter of $K\cap S$, then if $f(x,w^i)\leq-\gamma$ for all $i\in[N]$, $x$ is feasible in $CCP(\epsilon)$ with probability at least $1-\delta$ (this is similar to $SCP(N)$ but with an extra tolerance of $\gamma$).
This result is similar to Luedtke and Ahmed's other result in that it depends only on the size of $K\cap S$ and not on the structure of $S$.
However, due to the tolerance constant $\gamma$, it does not say anything about the feasibility of points on the boundary of $\{f(x,w^i)\leq 0\}$ (such as the optimum).

\section{A Clarkson-type sampling algorithm for \texorpdfstring{$S$}{S}-convex optimization}

To show that our introduction of $S$-optimization makes a lot of sense we present another application besides Theorem \ref{gen-calafiorecampi}. 
We show that a Clarkson-type algorithm can be used to compute the optimal solutions to a given $S$-convex optimization problem. 
This is efficient when the number of variables is constant. We consider again the solution of $S$-optimization problem with linear objective function and convex constraints

\begin{align*}
    \min \quad & c^T \mathbf x \\
    \text{subject to} \quad & f_i(x)  \leq 0, \quad f_i \text{ convex for all } i=1,2,\ldots, m, \\
               &{\mathbf x} \in S.
\end{align*}

We demonstrate that a well-known algorithm due to Clarkson can be extended to $S$-optimization as long as $S$ is closed, has a finite Helly number $h(S)$, and one
has can have an oracle to solve deterministic small-size subproblems.
The method devised by Clarkson \cite{clarkson}  works particularly well for geometric optimization problems in few variables.  Examples of applications 
include convex and linear programming, integer linear programming, the problem of computing the minimum-volume ball or ellipsoid enclosing a given 
point set in $\R^n$, the problem of finding the distance of two convex polytopes in $\R^n$, and many others.
E.g., Clarkson stated the following result about linear programs and integer linear programs (ILPs), which gives:

\begin{theorem}[Clarkson]
Given an $m \times n$ matrix $A$, a vector $\mathbf b \in \R^m$ and the integer program $\min\{c^T \mathbf x: A\mathbf x\leq\mathbf b, \mathbf x\in \Z^n, \mathbf 0\leq \mathbf x \leq\mathbf u\}$,
one can find a solution to this problem in a expected number of steps of order $O(n^2m \log(m)) + n \log(m) O(n^{n/2})$. 
While the algorithm is exponential it gives the best complexity for solving ILPs when the number of variables $n$ is fixed.
\end{theorem}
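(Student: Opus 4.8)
The plan is to realize this statement as the specialization of Clarkson's two-stage randomized algorithm \cite{clarkson} to the bounded integer program, in the same spirit as Theorem \ref{ourclarkson} but tracking the ILP-specific quantities rather than the generic Helly number $h(\Z^n)=2^n$. I would regard the feasible domain as the closed set $S=\Z^n\cap\{\mathbf x:\mathbf 0\le\mathbf x\le\mathbf u\}$ and the $m$ rows of $A\mathbf x\le\mathbf b$ as the constraints to be sampled, carrying the box bounds and integrality inside the violation test and the base-case oracle. The objective $c^T\mathbf x$ together with the current incumbent value plays the role of an extra halfspace, exactly as in the proof of Theorem \ref{gen-calafiorecampi}, so that ``improving on the incumbent while satisfying all sampled rows'' becomes an $S$-feasibility question.

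First I would set up Clarkson's iterative reweighting algorithm. One maintains integer multiplicities (weights) on the $m$ rows, repeatedly draws a weighted sample of size $O(n^2)$, solves the induced small integer program with the base oracle, and then doubles the weight of every row violated by the returned point. The analysis shows that the number of (successful) rounds is $O(n\log m)$; the correctness of the doubling argument rests on the Helly/Bell--Scarf property that the integer optimum over all $m$ rows is already certified by a bounded subfamily, so that at each round a violated row is hit with at least constant probability.

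Next I would account for the per-round work, which splits into exactly the two advertised terms. In each round one scans all $m$ rows, at cost $O(n)$ per violation test, i.e. $O(nm)$ per round; over $O(n\log m)$ rounds this contributes $O(n^2m\log m)$. In each round one also makes a single call to an oracle that solves an integer program in $n$ variables on the $O(n^2)$-row sample; I would solve each such instance exactly by a Lenstra--Kannan-type lattice enumeration whose cost in fixed dimension $n$ is $O(n^{n/2})$. Over the $O(n\log m)$ rounds these calls contribute $n\log(m)\,O(n^{n/2})$, and summing yields the claimed expected bound; since $n$ is fixed, $O(n^{n/2})$ is a constant and the total is linear in $m$ up to the $\log m$ factor.

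I expect the main obstacle to be justifying that the reweighting can be driven by the polynomial quantity $n$ rather than the exponential ILP Helly number $2^n$: the generic violator-space reduction behind Theorem \ref{ourclarkson} would force a factor $2^n$ into the first term, so obtaining the sharper $n^2$ requires exploiting the specific structure of the box-bounded integer program (bounded variables and an explicit lattice) to keep each sampled subproblem and each base call at $O(n^2)$ rows. The second delicate point is certifying the $O(n^{n/2})$ running time of the base oracle on these small integer subsystems, which is where a geometry-of-numbers enumeration bound, rather than the naive $n^{O(n)}$ Lenstra bound, must be invoked.
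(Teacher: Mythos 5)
First, a point of reference: the paper does not prove this statement at all --- it is imported verbatim from Clarkson \cite{clarkson} as background, and the paper's own contribution (Theorem \ref{ourclarkson}, proved via the violator-space machinery of Theorem \ref{keytoolvio}) deliberately pays a combinatorial dimension of $h(S)-1$, which for $S=\Z^n$ is $2^n-1$ by Doignon's theorem \cite{Doi1973,Bel1976,Sca1977}. So your proposal must stand on its own against Clarkson's original argument. Its skeleton is right --- iterative reweighting of the $m$ rows, constant-size weighted samples, a base oracle for small integer programs, and a Bell--Scarf/Doignon certificate that the optimum is determined by a bounded subfamily --- but the quantitative core is missing, and the hole sits exactly where you flagged your ``main obstacle.''

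The doubling analysis bounds the number of successful rounds by $O(\delta \log m)$ and requires samples of size governed by $\delta$, where $\delta$ is the maximum basis size of the underlying problem; the weight-growth argument needs every optimal solution to be pinned down by at most $\delta$ constraints. For integer programs that parameter is the $S$-Helly number $2^n$, and it is \emph{exactly} $2^n$, not $O(n)$: Doignon's extremal configuration lives in $\{0,1\}^n$, hence inside any box $\mathbf 0\le\mathbf x\le\mathbf u$, so ``bounded variables and an explicit lattice'' cannot reduce the basis-size parameter to $n$, contrary to your hope. Run generically, the reweighting gives a first term of order $2^n n m\log m$, not $n^2 m\log m$; you assert the sharper bound ($O(n^2)$-row samples, $O(n\log m)$ rounds) while simultaneously invoking ``the Helly/Bell--Scarf property'' that yields $2^n$, which is internally inconsistent, and you supply no alternative accounting that recovers Clarkson's stated first term. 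The second load-bearing step is likewise asserted rather than proved: no Lenstra--Kannan-type enumeration with cost $O(n^{n/2})$ per call is known (standard bounds are $n^{O(n)}$, e.g.\ roughly $n^{5n/2}$ for Kannan); in Clarkson's development the $O(n)^{n/2+O(1)}$ factor emerges from his own recursive base-case analysis, not from a geometry-of-numbers bound on an ILP oracle. Since both of the steps you yourself identify as delicate are left as placeholders, the proposal is an outline of the correct strategy rather than a proof.
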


Clarkson's algorithm requires that many small-size subsystems of the original problem are solved. This requires the call to an oracle to
solve the small systems.  The oracle originally provided by Clarkson in the case of regular integer programming was Lenstra's IP algorithm 
in fixed dimension.  As a consequence, when the number of variables is constant, Clarkson's algorithm gives a remarkable linear bound on
the complexity (see recent work by Eisenbrand \cite{eisenbrandfixeddim}). Here we prove Theorem \ref{ourclarkson} which is a direct
 generalization of Clarkson's theorem for convex continuous and integral optimization.

%

The key idea is to use the theory of \emph{violator spaces} introduced by  G\"artner, Matou{\v{s}}ek, R\"ust and \v{S}kovro\v{n} \cite{ViolatorSpaces2008}.
They showed it can be used as a general framework to work with convex optimization problems. 
Essentially, a violator space is an abstract optimization problem in which we have a finite set of constraints or elements $H$ and a function that,  given any subset 
of constraints $G$, indicates which other constraints in $H \setminus G$ \emph{violate} the feasible solutions to $G$. If one has a violator space structure,  
the optimal solution of the problem can be computed via a randomized method whose  running time is {\em linear} in the number of constraints defining the problem, 
and {\em subexponential} in the dimension  of the problem.  Violator spaces include all 
prior abstractions such as LP-type problems \cite{amenta,sharirwelzl}. The key definition from   \cite{ViolatorSpaces2008} is the following:

\begin{definition}
A {\it violator space} is a pair $(H,\vi)$, where $H$  is a finite set and $\vi$  a mapping $2^H\to2^H$, such that the following two axioms hold: 
\\\begin{tabular}{ll}
{\it Consistency}: & $G\cap \vi(G)=\emptyset$ holds for all $G\subseteq H$, and\\
{\it Locality}: & $\vi(G)=\vi(F)$ holds for all $F\subseteq G\subseteq H$ such that
$G\cap \vi(F)=\emptyset$.\\
\end{tabular}
\end{definition}

There are three important ingredients of every violator space: a basis, the combinatorial dimension, and a primitive test (which will be answered by an oracle).
First, as in the simplex method for linear programming the problem will be defined by  bases, thus we need to have a notion of basis for our optimal solutions.

\begin{definition}[{G\"artner et al. \cite{ViolatorSpaces2008}}]
A \emph{basis} of a violator space is defined in analogy to a basis of a linear programming problem: a minimal set of constraints that 
defines a solution space. Specifically, \cite[Definition~7]{ViolatorSpaces2008} defines $B\subseteq H$  
to be a  {\it basis} if  $B\cap \vi(F)\neq\emptyset$ holds for all proper subsets $F\subset B$. 
For $G\subseteq H$, a {basis of $G$}  is a minimal subset $B$ of $G$ with $\vi(B)=\vi(G)$. 

Moreover, violator space bases come with a natural combinatorial invariant, which is strongly related to the Helly numbers we discussed earlier.
The size of a largest basis of a violator space  $(H, V)$ is called the {\it combinatorial dimension}  of the violator space and denoted by $\delta=\delta(H, V)$.
\end{definition}

The primitive test operation is used as black box in all stages of the algorithm, 
is the so-called \emph{violation tests primitive}.   Given a violator space $(H,\vi)$,  some set $G \subset H$,  and some element $h\in H\setminus G$,  the  \emph{primitive} test decides whether $h\in\vi(G)$. 

G\"artner at al \cite{ViolatorSpaces2008} proved a crucial property: knowing the violations $\vi(G)$ for all $G \subseteq H$  is enough to compute a largest basis.  To do so, one can  utilize Clarkson's randomized algorithm  to compute a basis of some violator space $(H,\vi)$ with $m=\abs{H}$.

The main idea to improve over a brute-force search is due to Clarkson~\cite{clarkson}.

As described above, all one needs is to be able  to answer the  {\it Primitive query:} Given $G \subset H$ and $h \in H \setminus G$, decide whether $h \in V(G)$.  Second, the runtime is given in terms of the combinatorial dimension  $\delta(H, V)$ and the size of the input set of constraints $H$.  The key  result we will use in the rest of the paper is about the complexity of finding a basis:

\begin{theorem}\cite[Theorem~27]{ViolatorSpaces2008} \label{keytoolvio}
Using Clarkson's algorithms, a basis of $H$ in a violator space $(H,\vi)$ can be found by answering the  primitive query an expected $O\left(\delta \abs{H} + \delta^{O(\delta)}\right)$  times. 
\end{theorem}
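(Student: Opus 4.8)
\subsection*{Proof proposal for Theorem \ref{keytoolvio}}

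The plan is to reconstruct Clarkson's randomized scheme entirely inside the violator-space framework, so that the only operations performed are primitive queries ``$h\in\vi(G)$?'', and then to charge the total number of such queries against the combinatorial dimension $\delta$ and $n:=\abs{H}$. Two combinatorial facts drive the whole analysis and I would establish them first. The first is that in any violator space the number of \emph{extreme} elements of a set $G\subseteq H$ (those $h\in G$ with $\vi(G\setminus\{h\})\neq\vi(G)$) is at most $\delta$: every extreme element lies in every basis of $G$, and some basis has at most $\delta$ elements by definition of the combinatorial dimension. The second is the \emph{Sampling Lemma} of G\"artner and Welzl: if $R$ is a uniformly random $r$-element subset of $H$, then
\begin{equation*}
	E\bigl[\,\abs{\vi(R)}\,\bigr]\;\le\;\delta\,\frac{n-r}{r+1}.
\end{equation*}
I would prove this by the standard double-counting identity relating the expected violator count of a random $r$-set to the expected number of extreme elements of a random $(r+1)$-set; the point to verify is that this identity uses only the \emph{consistency} and \emph{locality} axioms, and so transfers verbatim from LP-type problems to violator spaces.

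With these tools I would assemble the algorithm as two nested sampling routines plus a base solver. The \textbf{outer (accumulation) routine} maintains a working set $V_0$, initially empty. In each round it draws a uniform sample $R$ of size $r\approx\delta\sqrt{n}$, computes a basis $B$ of $R\cup V_0$ by recursing into the inner routine, and then tests every element of $H$ against $\vi(B)$, at a cost of exactly $n$ primitive queries, to collect $V=\vi(B)\cap H$. If $V=\emptyset$ then $B$ is already a basis of $H$ and we stop; otherwise we set $V_0\leftarrow V_0\cup V$ and repeat. The key point, which replaces the ``the objective strictly improves'' argument available for LP-type problems, is that each nonterminating round adds a \emph{fresh} element of the target basis $B^*$ of $H$ to $V_0$: consistency gives $(R\cup V_0)\cap\vi(B)=\emptyset$, hence $V\cap V_0=\emptyset$, while one shows $V\cap B^*\neq\emptyset$; since $\abs{B^*}\le\delta$, the number of rounds is at most $\delta$. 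The Sampling Lemma gives $E[\abs{V}]=O(\sqrt{n})$, so $V_0$ stays of size $O(\delta\sqrt{n})$ and all recursive calls are on sets of size $O(\delta\sqrt{n})$. Charging $n$ queries to each of the $O(\delta)$ rounds yields the linear term $O(\delta n)$.

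The \textbf{inner (reweighting) routine} solves the subproblems of size $m=O(\delta\sqrt{n})$ handed to it. It assigns unit weights, repeatedly samples $R$ of size $\Theta(\delta^2)$ according to the current weights, finds a basis of $R$ via the base solver, locates its violators, and doubles the weights of the violated constraints whenever their total weight is at most a $1/\delta$ fraction of the whole. A standard potential argument, comparing the geometric growth of the total weight with the forced doubling of a fixed basis element each successful round, bounds the number of rounds by $O(\delta\log m)$, each costing $O(m)$ primitive queries and one base solve. The \textbf{base solver} finds a basis of a set of size $\Theta(\delta^2)$ by examining all subsets of size at most $\delta$ and using primitive queries to check the basis condition $B\cap\vi(F)\neq\emptyset$, costing $\delta^{O(\delta)}$ queries. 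Substituting $m=O(\delta\sqrt{n})$, the inner routine contributes $O(\delta^2\sqrt{n}\log n)=o(\delta n)$ queries, while the $O(\delta^2\log n)$ total base solves contribute $\delta^{O(\delta)}\log n$, which is itself $O(\delta n+\delta^{O(\delta)})$ (if $\log n\le\delta^{O(\delta)}$ it is absorbed into the second term, and otherwise it is $o(n)$). Summing gives the advertised total of $O(\delta n+\delta^{O(\delta)})$ expected primitive queries.

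I expect the main obstacle to be the two places where the absence of a totally ordered objective must be circumvented. In the LP-type setting both the termination of the accumulation routine and the reweighting potential argument rest on a real-valued objective that strictly improves upon resampling; in a bare violator space there is only the mapping $\vi$. The fix, and the technical heart of the argument, is to prove the implication ``$B$ is a basis of $R\cup V_0$ and $\vi(B)\cap H\neq\emptyset$ $\Longrightarrow$ $\vi(B)$ meets every basis $B^*$ of $H$'' using \emph{locality} alone, which is exactly what guarantees that each round captures a new basis element and that progress is monotone and bounded by $\delta$. Verifying this implication carefully, together with checking that the Sampling Lemma's double-counting survives the weakening from LP-type problems to violator spaces, is where the care is needed; the summation of query counts is then routine.
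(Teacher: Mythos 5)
Your task here has an unusual wrinkle: the paper contains \emph{no proof} of Theorem \ref{keytoolvio}. It is quoted as an external result, Theorem~27 of G\"artner, Matou\v{s}ek, R\"ust and \v{S}kovro\v{n} \cite{ViolatorSpaces2008}, and is used as a black box in the proof of Theorem \ref{ourclarkson}. So the comparison must be against the cited source, and measured that way your reconstruction follows essentially the same route and correctly identifies the technical heart. Your two preliminary facts are exactly the ones used there: every extreme element of $G$ lies in every basis $B$ of $G$ (if $h \notin B$, then $B \subseteq G\setminus\{h\} \subseteq G$, and consistency plus locality force $\vi(G\setminus\{h\}) = \vi(B) = \vi(G)$, contradicting extremeness), so there are at most $\delta$ of them; and the G\"artner--Welzl Sampling Lemma, whose double-counting argument indeed uses only the two violator-space axioms. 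The implication you flag as the crux also goes through by a two-sided application of locality: if $B$ is a basis of $R \cup V_0 \subseteq H$, $B^*$ a basis of $H$, and $B^* \cap \vi(B) = \emptyset$, then $(B \cup B^*) \cap \vi(B) = \emptyset$ gives $\vi(B \cup B^*) = \vi(B)$, while $B \cap \vi(B^*) = B \cap \vi(H) = \emptyset$ (consistency for $H$, since $B \subseteq H$) gives $\vi(B \cup B^*) = \vi(B^*) = \vi(H)$; hence $\vi(B) \cap H = \vi(H) \cap H = \emptyset$, contradicting a nonempty violator set. This is precisely what replaces the strictly improving objective of the LP-type setting, and it yields, as you say, at most $\delta$ successful outer rounds since each captures a fresh element of $B^*$ (freshness by consistency, $V \cap (R \cup V_0) = \emptyset$).

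Two points need tightening to match the source. First, both loops must \emph{reject} oversized sampling outcomes rather than rely on expectations alone: with outer sample size $r = \delta\sqrt{n}$ the Sampling Lemma gives $E[\abs{V}] \leq \delta(n-r)/(r+1) \approx \sqrt{n}$, and one adds $V$ to $V_0$ only when $\abs{V} \leq 2\sqrt{n}$ (similarly, the inner routine doubles weights only when the violated weight is at most a $1/(3\delta)$ fraction of the total); Markov's inequality then makes each round succeed with probability at least $1/2$, so the expected round count is within a factor $2$ of the successful-round count. Without this filter, $V_0$ and hence the recursive subproblem sizes are unbounded random variables, and the bound $\abs{V_0} = O(\delta\sqrt{n})$ you invoke does not hold deterministically, which muddies the expected-query accounting (expectations of products of dependent random costs). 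Second, minor bookkeeping: the inner routine is called once per outer round, i.e., $O(\delta)$ times in expectation, so its query contribution is $O(\delta^3\sqrt{n}\log n)$ rather than the $O(\delta^2\sqrt{n}\log n)$ you wrote; this is still absorbed into $O(\delta n + \delta^{O(\delta)})$ by the case distinction you already make, so the final bound is unaffected. With these repairs your sketch is a faithful reconstruction of the cited proof.
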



\begin{proof}[Proof of Theorem \ref{ourclarkson}]
Let $H =\{ f_1, f_2, \dots, f_m\}$ be the constraints of the $S$-convex optimization problem of the statement of Theorem \ref{ourclarkson}. 
We define a the violator set operator $\vi(G)$ for a subset of inequalities $G \subset H$ as follows:
We provide each $S$-program with a universal tie-breaking rule, for instance, using lexicographic ordering.
A constraint $h \in H$ is in $\vi(G)$ if the  optimal solution value of the subsystem $G$ 
with respect to the objective function, denoted $\vec{x}_G$,  
is not  equal to the unique optimal solution of $G \cup\{h\}$, denoted $\vec{x}_{G \cup \{h\}}$.
Note that we need to have a total ordering on the possible feasible
solutions of $G$ and the fact that $S$ is closed to have a unique optimum. 


For our proof we define the violator map as follows: a constraint $h \in H$ is in $\vi(G)$ if
the optimal solutions satisfy $\vec{x}_G > \vec{x}_{G \cup \{h\}}$.
If we assume that $G$ has no feasible solutions, we define $V(G)$ as being the empty set. 
Indeed any new constraint added to the integer program can only decrease the number of feasible solutions.
We need to check that the two conditions presented in the definition of violator spaces.
are satisfied. The consistency condition is clearly satisfied. 

Assume now that $F \subseteq G \subseteq H$ and $G \cap \vi(F) = \emptyset$.
To show locality we must verify that $\vi(F) = \vi(G)$.
Note that by the hypothesis  $G \cap \vi(F)$ it means that $\vec{x}_G=\vec{x}_{F}$ because
otherwise at least one element in $G$ must be in $\vi(F)$.

Now we verify first the containment $\vi(F) \subseteq \vi(G)$. Take $h\in \vi(F)$; if $h \notin \vi(G)$ then 
$\vec{x}_{G \cup \{h\}} =\vec{x}_G=\vec{x}_{F} > \vec{x}_{F \cup \{h\}}$. However, $F \cup \{h\} \subset G \cup \{h\}$.
It follows that $\vec{x}_{F \cup \{h\}} \geq \vec{x}_{G \cup \{h\}}$ too---a contradiction.
Now we check $\vi(G) \subset \vi(F)$. Take $h \in \vi(G)$, if  $\vec{x}_{F \cup \{h\}}=\vec{x}_{F} = \vec{x}_G >  \vec{x}_{G \cup \{h\}} $
But then there exist $g \in G$ such that $g \in \vi(F \cup \{h\})=\vi(F)$ a contradiction.

Since the two conditions of a violator space are satisfied, all that is left to apply Theorem \ref{keytoolvio} is to outline what the combinatorial dimension and
the primitive test are. First, a basis for $G$, using this violator space,  represents an optimal solution of the $S$-subproblem. 
But if  we have the optimum value $\vec{x}_G$, then the optimal solution is defined by no more than $(h(S)-1)$ of the $f_i$. 
This is because $f_i, i=1\dots N$ together with $c^Tx<c^Tx_N$ is an  $S$-convex set which has no solutions in $S$. Thus by the definition of the $S$-Helly number, there are no
more than $h(S)$ infeasible subfamilies, but this means that from the original $f_i \in G$ only $h(S)-1$ participate. Therefore the combinatorial dimension of this violator space is $h(S)-1$.
The primitive test is provided by an oracle that solves smaller problems of size $O(h(S))$. Therefore, the conclusion of Theorem \ref{ourclarkson} follows by applying Theorem \ref{keytoolvio}.
\end{proof}

\section{Concluding Remarks} We have shown that the quality guarantees of the sampling method of Calafiore and Campi can be extended to more abstract convex optimization constraints. Clearly the value of these results depends on having a practical algorithm to solve $SCP(N)$. Similarly, Clarkson's method the query $h\in \vi(C)$ is answered  via calls to the primitive as a black box or oracle.  The algorithms we derive are randomized but run in expected polynomial time complexity when the number of
discrete variables is fixed. Moreover the algorithmic complexity is in fact linear in the number of constraints, and it depends on calls to an oracle that solves small size subproblems. The size of these smaller subproblems is precisely the $S$-Helly number. 

In both cases, one requires an oracle to solve or test feasibility of a small-size $S$-convex algorithms. These exist for $S=\R^d$, and for $S=\Z^d, S=\Z^{d-k}\times\R^k$, as presented by the usual deterministic algorithms for mixed-integer convex optimization.  It is possible to prove, using the results of \cite{barvipom}, that for $S$ equal to the difference of a lattice with the union of finitely many of its sublattices, one can have such an algorithm when all the constraints define  a polyhedron of fixed dimension. In a forthcoming paper we will present experiments that use the sampling bounds shown here to solve some of these problems. The development of other such oracles will require the development of some interesting mathematics.

\section*{Acknowledgements} The authors are grateful to Shabbir Ahmed for introducing them to this subject. 
This research was supported by a UC MEXUS grant that helped established the collaboration of the UC Davis and UNAM teams. We are grateful for the support.
The first, second and third author travel was supported in part by the Institute for Mathematics  and its 
Applications and an NSA grant. The third and fourth authors  were also supported by CONACYT project 166306.

\bibliographystyle{amsplain}
\bibliography{chanceconstrained}
\end{document}